\title[{Observability from a measurable set}]{Observability from a measurable set for functions in a Gevrey class}
\author[I.~Kukavica]{Igor Kukavica}
\address{Department of Mathematics\\
University of Southern California\\
Los Angeles, CA 90089}
\email{kukavica@usc.edu}
\author[L.~Li]{Linfeng Li}
\address{Department of Mathematics\\
University of California Los Angeles\\
Los Angeles, CA 90095}
\email{lli265@math.ucla.edu}
\chardef\forshowkeys=0
\chardef\refcheck=0
\chardef\showllabel=0
\chardef\sketches=1
\begin{document}
\def\linfeng#1{{\textcolor{red}{#1}}}
%varmac (various macros)
\def\bnew{\colr }
\def\enew{\colb {}}
\def\bold{\colu }
\def\eold{\colb {}}
\def\KK{(\diam \Omega)}
\def\KL{\kappa}
\def\KKnp{\diam \Omega}
\def\intb{[b]}
\def\PP{\mathcal{P}}
\def\dPP{\dot{\mathcal{P}}}
\def\LL{\mathcal L}
\def\qd{q_{\textrm D}^{}}
\def\AA{C_2}
\def\BB{(C_1+C_2)}
\def\pp{p}
\def\qq{q}
\def\MM{M}
\def\CC{C}
\def\uu{\tilde u }
\def\vv{\tilde v }
\def\ww{\tilde w }
\def\ggg{u}
\def\UU{\tilde U}
\def\eps{\epsilon}
\def\QQ{{\bar Q}}

\def\PP{P}
\def\RR{\mathbb R}
\def\TT{\mathbb T}
\def\ZZ{\mathbb Z}
\def\erf{\mathrm{Erf}}
\def\red#1{\textcolor{red}{#1}}
\def\blue#1{\textcolor{blue}{#1}}
\def\mgt#1{\textcolor{magenta}{#1}}

\def\les{\lesssim}
\def\ges{\gtrsim}

\renewcommand*{\Re}{\ensuremath{\mathrm{{\mathbb R}e\,}}}
\renewcommand*{\Im}{\ensuremath{\mathrm{{\mathbb I}m\,}}}

\ifnum\showllabel=1
\def\llabel#1{\marginnote{\color{lightgray}\rm\small(#1)}[-0.0cm]\notag}
\else
\def\llabel#1{\notag}
\fi

\newcommand{\norm}[1]{\left\|#1\right\|}
\newcommand{\nnorm}[1]{\lVert #1\rVert}
\newcommand{\abs}[1]{\left|#1\right|}
\newcommand{\NORM}[1]{|\!|\!| #1|\!|\!|}

\newtheorem{Theorem}{Theorem}[section]
\newtheorem{Corollary}[Theorem]{Corollary}
\newtheorem{Definition}[Theorem]{Definition}
\newtheorem{Proposition}[Theorem]{Proposition}
\newtheorem{Lemma}[Theorem]{Lemma}
\newtheorem{Remark}[Theorem]{Remark}

\def\theequation{\thesection.\arabic{equation}}
\numberwithin{equation}{section}

\definecolor{myblue}{rgb}{.8, .8, 1}

\newlength\mytemplen
\newsavebox\mytempbox

\makeatletter
\newcommand\mybluebox{%
\@ifnextchar[%]
{\@mybluebox}%
{\@mybluebox[0pt]}}

\def\@mybluebox[#1]{%
\@ifnextchar[%]
{\@@mybluebox[#1]}%
{\@@mybluebox[#1][0pt]}}

\def\@@mybluebox[#1][#2]#3{
\sbox\mytempbox{#3}%
\mytemplen\ht\mytempbox
\advance\mytemplen #1\relax
\ht\mytempbox\mytemplen
\mytemplen\dp\mytempbox
\advance\mytemplen #2\relax
\dp\mytempbox\mytemplen
\colorbox{myblue}{\hspace{1em}\usebox{\mytempbox}\hspace{1em}}}

\makeatother

\def\biglinem{\vskip0.5truecm\par==========================\par\vskip0.5truecm}
\def\inon#1{\hbox{\ \ \ \ \ }\hbox{#1}}                %in or on
\def\and{\text{\indeq and\indeq}}
\def\onon#1{\text{~~on~$#1$}}
\def\inin#1{\text{~~in~$#1$}}

\def\startnewsection#1#2{\section{#1}\label{#2}\setcounter{equation}{0}}   
\def\sgn{\mathop{\rm sgn\,}\nolimits}    
\def\Tr{\mathop{\rm Tr}\nolimits}    
\def\div{\mathop{\rm div}\nolimits}
\def\curl{\mathop{\rm curl}\nolimits}
\def\dist{\mathop{\rm dist}\nolimits}  
\def\sp{\mathop{\rm sp}\nolimits}  
\def\supp{\mathop{\rm supp}\nolimits}
\def\diam{\mathop{\rm diam}\nolimits}

\def\indeq{\quad{}}           
\def\colr{\color{red}}
\def\colb{\color{black}}
\def\coly{\color{lightgray}}

\definecolor{colorgggg}{rgb}{0.1,0.5,0.3}
\definecolor{colorllll}{rgb}{0.0,0.7,0.0}
\definecolor{colorhhhh}{rgb}{0.3,0.75,0.4}
\definecolor{colorpppp}{rgb}{0.7,0.0,0.2}
\definecolor{coloroooo}{rgb}{0.45,0.0,0.0}
\definecolor{colorqqqq}{rgb}{0.1,0.7,0}
\def\colg{\color{colorgggg}}
\def\collg{\color{colorllll}}
\def\cole{\color{coloroooo}}
\def\cole{\color{black}}
\def\coleo{\color{colorpppp}}
\def\colu{\color{blue}}
\def\colc{\color{colorhhhh}}
\def\colW{\colb}   %color for weight
\definecolor{coloraaaa}{rgb}{0.6,0.6,0.6}%%%out
\def\colw{\color{coloraaaa}}

\def\comma{ {\rm ,\qquad{}} }            
\def\commaone{ {\rm ,\quad{}} }          
\def\nts#1{{\color{red}\hbox{\bf ~#1~}}}
\def\ntsf#1{\footnote{\color{colorgggg}\hbox{#1}}} 
\def\cmi#1{{\color{red}\hbox{IK: ~#1~}}}
\def\cml#1{{\color{red}\hbox{LL: ~#1~}}} 
\def\blackdot{{\color{red}{\hskip-.0truecm\rule[-1mm]{4mm}{4mm}\hskip.2truecm}}\hskip-.3truecm}
\def\bluedot{{\color{blue}{\hskip-.0truecm\rule[-1mm]{4mm}{4mm}\hskip.2truecm}}\hskip-.3truecm}
\def\purpledot{{\color{colorpppp}{\hskip-.0truecm\rule[-1mm]{4mm}{4mm}\hskip.2truecm}}\hskip-.3truecm}
\def\greendot{{\color{colorgggg}{\hskip-.0truecm\rule[-1mm]{4mm}{4mm}\hskip.2truecm}}\hskip-.3truecm}
\def\cyandot{{\color{cyan}{\hskip-.0truecm\rule[-1mm]{4mm}{4mm}\hskip.2truecm}}\hskip-.3truecm}
\def\reddot{{\color{red}{\hskip-.0truecm\rule[-1mm]{4mm}{4mm}\hskip.2truecm}}\hskip-.3truecm}

\def\tdot{{\color{green}{\hskip-.0truecm\rule[-.5mm]{2mm}{4mm}\hskip.2truecm}}\hskip-.2truecm}
\def\gdot{\greendot}
\def\bdot{\bluedot}
\def\ydot{\cyandot}
\def\rdot{\cyandot}

\def\fractext#1#2{{#1}/{#2}}

\def\quinn#1{\text{{\textcolor{colorqqqq}{#1}}}}
\def\igor#1{\footnote{\text{{\textcolor{colorqqqq}{#1}}}}}

%\author[I.~Kukavica]{Igor Kukavica}
%\address{Department of Mathematics, University of Southern California, Los Angeles, CA 90089}
%\email{kukavica@usc.edu}
%
%
%
%\author[Q.~Le]{Quinn Le}
%\address{Department of Mathematics, University of Southern California, Los Angeles, CA 90089}
%\email{ntle@usc.edu}
%
%\author[L.~Li]{Linfeng Li}
%\address{Department of Mathematics, University of Southern California, Los Angeles, CA 90089}
%\email{lli265@usc.edu}

\begin{abstract}
We provide an observability inequality in terms of a measurable set for general Gevrey regular functions. 
As an application, we establish an observability estimate from a measurable set for sums of Laplace eigenfunctions in a compact and connected boundaryless Riemannian manifold that belongs to the Gevrey class.
The estimate has an explicit dependence on the maximal eigenvalue.
\hfill \today
\end{abstract}

\maketitle

\startnewsection{Introduction}{sec01}	
In this paper, we establish an observability inequality from a measurable set for Gevrey-regular functions in a connected, bounded domain $\Omega\subset \mathbb{R}^d$, where $d\in\mathbb{N}$.  Under the assumptions that a doubling property holds for $f$ (see \eqref{EQ110} and \eqref{EQ130} below) and $f\in C^\infty (\Omega)$ is $\sigma$-Gevrey-regular, where $\sigma\geq 1$, we prove that for a measurable subset $E\subset \Omega$ with positive measure, we have
\begin{align}
	\Vert f\Vert_{L^\infty (\Omega )}
	\leq
	C \Vert f\Vert_{L^\infty (E)}
	,
	\label{EQ44}
\end{align}
for a constant $C>0$.  As a direct application, we provide an observability estimate from a measurable sets for the sum of Laplace eigenfunctions in compact and connected boundaryless Riemannian manifold that belongs to the $\sigma$-Gevrey class, for any $\sigma\geq 1$.  Additionally, we adapt the proof of the main theorem to give an observability estimate with the doubling property from \cite{IK1}, which applies to solutions of a one-dimensional parabolic equation of arbitrary order with Gevrey coefficients.
Observability estimates of type \eqref{EQ44} imply that certain parabolic equations with Gevrey coefficients are null-controllable on any subset of positive measure (see~\cite{AE, AEWZ, CSZ, EMZ}, where analyticity was assumed).

When the subset $E \subset \Omega$ in~\eqref{EQ44} is replaced by an open ball $B \subset \Omega$, such estimates are well-studied and are closely related to quantitative unique continuation properties (see, for example, the review papers \cite{Ke, V2}, and also \cite{CGT, IK1, IK2} for unique continuation properties for elliptic and parabolic equations with Gevrey coefficients). In~\cite{V1}, Vessella studied the recovery of an analytic function $f$ on a bounded, connected domain $\Omega \subset \mathbb{R}^d$ from its values on an open subset $E \subset \Omega$, establishing a stability estimate for $f$ that depends on the measure of $E$. Later, Apraiz and Escauriaza in~\cite{AE} provided an estimate of type \eqref{EQ44} for real-analytic functions when $E \subset \Omega$ is a measurable set with positive measure.

In~\cite{AEWZ}, the authors have proven observability estimates for the heat equation from subsets of positive measure or positive surface measure in $\Omega \times (0, T)$ and $\partial \Omega \times (0, T)$, respectively (see also \cite{N1, N2, PW, EMZ, LZ, LWYZ} for related estimates in other analytic settings).
%However, much less is known about the observability inequality from a measurable set $E \subset \Omega$ in non-analytic settings. 
In \cite{LoM}, Logunov and Malinnikova proved a quantitative propagation of smallness from a set of positive measure for the solution to an elliptic equation with Lipschitz coefficients in $\mathbb{R}^n$.
Using the doubling index of solutions to elliptic equations, they proved an observability estimate from a measurable set for the Laplace eigenfunction on a $C^\infty$ smooth Riemannian manifold;
see also \cite{DYZ,BM} for other observability estimates from measurable sets of solutions to parabolic equations.
Recently, Y.~Zhu \cite{Z} studied quantitative propagation of smallness for solutions to elliptic equations in two dimensions. Note that elliptic equations in the plane are special, as their solutions can be reduced to holomorphic functions using quasi-regular mappings and representation theorems. A key idea used in~\cite{V1, AE, Z} is based on the Hadamard three-circle theorem and Cauchy estimates for holomorphic functions, tools that are unavailable in the Gevrey setting.

This paper establishes an observability estimate from a measurable sets for general Gevrey functions satisfying a doubling property. 
As a direct application, we present an observability estimate for sums of Laplace eigenfunctions in a compact and connected Riemannian manifold that has Gevrey regularity.
Notably, our method is adaptable to provide observability estimates from a measurable sets in other settings. For instance, we consider~\cite{IK1}, where the authors gave a quantitative unique continuation estimate for a one-dimensional parabolic equation with Gevrey coefficients, and~\cite{IK2}, where a higher-order elliptic equation with Gevrey coefficients was studied, yielding an observability estimate for unique continuation with simple complex characteristics and only Gevrey coefficients.

Our results are obtained using the elliptic iterate theorem from~\cite{LM1, LM2}, which addresses the interior regularity of solutions to elliptic problems with Gevrey-class coefficients. 
Another essential component is the approximation of a function by a polynomial with error estimates based on Gevrey regularity (see~\cite{IK1, CKL}). This paper also draws from \cite{AE} in finding separation points within a measurable set with positive measure, as detailed in the proof of Theorem \ref{T01} below.

The paper is organized as follows. In Section~\ref{sec02}, we state the main result, Theorem~\ref{T01}, an observability inequality for Gevrey-regular functions with a doubling property. As an application, Theorem~\ref{T02} provides an observability estimate for the sum of Laplace eigenfunctions with explicit dependence on the largest eigenvalue and number of eigenfunctions. 
In Section~\ref{sec03}, we prove all main results.

\startnewsection{Main results}{sec02}
Let $\Omega$ be a connected, bounded domain in $\mathbb{R}^d$,
where $d\in\mathbb{N}$ is fixed.
For a function
$f\in C^\infty (\Omega)$, 
assume that there exist constants $M\geq 1$ and $\delta>0$ such that
\begin{align}
	\Vert \partial^\alpha f  \Vert_{L^\infty (\Omega)}
	\leq
	\frac{M |\alpha|!^\sigma}{\delta^{|\alpha|}}
	\Vert f\Vert_{L^\infty (\Omega)}
	\comma
	\alpha \in \mathbb{N}_0^d
	,
	\label{EQ110}
\end{align}
where $\sigma\geq 1$ is the Gevrey parameter.
Additionally, we assume a doubling-type property, namely that there exist constants $\kappa\geq 2$ and $r_0>0$ such that
\begin{align}
	\Vert f\Vert_{L^\infty (B_{2r} (x) \cap \Omega)}
	\leq
	\KL
	\Vert f\Vert_{L^\infty (B_r (x) \cap \Omega)}
	\comma
	r\in (0,r_0]
	\comma
	x\in \Omega
	.
	\label{EQ130}
\end{align}
The condition~\eqref{EQ130} is known to hold for eigenfunctions of the Laplace operator (see~\cite{DF1,DF2}). 
There are also other variants of the doubling property of type
\eqref{EQ130}, and, in fact, the field
of quantitative unique continuation is concerned with estimates of type~\eqref{EQ130} (see~\cite{IK1, IK2, Ku1}).

The following theorem provides an observability estimate
from a measurable set
for functions~$f$ satisfying \eqref{EQ110} and~\eqref{EQ130}.

\cole
\begin{Theorem}
	\label{T01}
Suppose that $\Omega$ is a $C^1$ domain.
Let $M,\delta, \KL,r_0>0$ be constants,
let $\sigma\geq 1$,
and assume that 
	$f\in C^\infty (\Omega)$ satisfies \eqref{EQ110} and~\eqref{EQ130}. 
Then, for any measurable set $E\subset \Omega$ with positive measure,
we have
	\begin{align}
		\Vert f\Vert_{L^\infty (\Omega )}
		\leq
		C 
		\Vert f\Vert_{L^\infty (E)}
		,
		\label{EQ100}
	\end{align}
where $C=C(M,\delta,\KL, r_0, \sigma, |\Omega|/|E|, \Omega)>0$ is a constant. 
\end{Theorem}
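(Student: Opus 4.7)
The plan is to combine three ingredients: a density-point localization (following \cite{AE}) that extracts a ball in which $E$ has a definite relative measure, a Gevrey polynomial approximation of $f$ coupled with a Remez/Brudnyi--Ganzburg type inequality, and the doubling property \eqref{EQ130} to propagate the resulting local bound globally.

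I would first apply a Lebesgue-density argument, as in \cite{AE}, to produce a point $x_0\in\bar\Omega$ and a radius $r^{\ast}>0$ depending only on $\Omega$ and $|\Omega|/|E|$, with $|E\cap B_{r^{\ast}}(x_0)|\geq \lambda\,|B_{r^{\ast}}(x_0)|$ for some $\lambda=\lambda(|\Omega|/|E|)>0$. For an integer $N$ and $r\in(0,r^{\ast}]$, I would then replace $f$ on $B_r(x_0)$ by its degree-$N$ Taylor polynomial $P_N$ at $x_0$; the Gevrey assumption \eqref{EQ110} bounds the remainder by
\[
 \|f-P_N\|_{L^\infty(B_r(x_0))} \leq C_1 M\,((N+1)!)^{\sigma-1}\,(dr/\delta)^{N+1}\,\|f\|_{L^\infty(\Omega)},
\]
while the Brudnyi--Ganzburg (polynomial Remez-type) inequality on the ball $B_r(x_0)$ produces
\[
 \|P_N\|_{L^\infty(B_r(x_0))} \leq (C_2/\lambda)^{N}\,\|P_N\|_{L^\infty(E\cap B_r(x_0))}.
\]
Writing $\|P_N\|_{L^\infty(E\cap B_r(x_0))}\leq\|f\|_{L^\infty(E)}+\|f-P_N\|_{L^\infty(B_r(x_0))}$ and invoking the triangle inequality yields a local estimate of the form
\[
 \|f\|_{L^\infty(B_r(x_0))} \leq C^{N}\,\|f\|_{L^\infty(E)} + C^{N}\,((N+1)!)^{\sigma-1}\,(dr/\delta)^{N+1}\,\|f\|_{L^\infty(\Omega)}.
\]

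Next I would use \eqref{EQ130} to globalize. Iterating the doubling inequality at $x_0$ from scale $r$ up to $r_0$ introduces a factor $(r_0/r)^{\log_2\kappa}$, and chaining $r_0$-balls through $\Omega$ (which is possible since $\Omega$ is connected and $C^1$) produces a further factor $C_3=C_3(\Omega,\kappa,r_0)$, yielding $\|f\|_{L^\infty(\Omega)}\leq C_3(r_0/r)^{\log_2\kappa}\|f\|_{L^\infty(B_r(x_0))}$. Substituting into the local estimate above, I would then fix $N$ large enough that $N+1>\log_2\kappa$, and shrink $r>0$ (depending on $\sigma$, $M$, $\delta$, $\kappa$, $r_0$, $\lambda$, $\Omega$) until the coefficient of $\|f\|_{L^\infty(\Omega)}$ on the right-hand side falls below $1/2$; absorbing that term on the left gives \eqref{EQ100} with a constant of the claimed form.

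The principal obstacle is to balance three competing quantities: the exponential factor $C^N$ from the polynomial inequality, the Gevrey factorial $((N+1)!)^{\sigma-1}$ in the remainder, and the inverse power $r^{-\log_2\kappa}$ from the doubling propagation. The analytic case $\sigma=1$ is substantially easier because the Taylor series actually converges on a fixed ball, as exploited via complex-analytic tools (Hadamard three-circle, Cauchy estimates) in \cite{AE,V1}; such tools are unavailable for $\sigma>1$, so the degree $N$ and the scale $r$ must be chosen together. The correct order of choices---first $\lambda$ from $|\Omega|/|E|$, then $N$ exceeding $\log_2\kappa$, and finally $r$ small enough that $r^{N+1-\log_2\kappa}$ dominates the remaining factors---is essential, and one must track throughout that every constant depends only on $M$, $\delta$, $\kappa$, $r_0$, $\sigma$, $|\Omega|/|E|$, and $\Omega$.
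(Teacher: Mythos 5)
Your overall strategy is sound and, in its key approximation step, genuinely different from the paper's: where you approximate $f$ by its multivariate Taylor polynomial and control the polynomial via a Brudnyi--Ganzburg/Remez inequality on the ball, the paper reduces to one dimension --- it selects a ray $L$ through the good ball on which $E$ has one-dimensional measure $\gtrsim |E|r$, places $n+1$ separated interpolation nodes \emph{inside} $\overline{L\cap E}$, and bounds the Lagrange interpolant directly from the node separation, thereby never needing a multivariate Remez inequality. Your finishing step also differs: you fix $N$ with $N+1>\log_2\kappa$ and then shrink $r$ to make the coefficient of $\Vert f\Vert_{L^\infty(\Omega)}$ at most $1/2$ and absorb, which suffices for the qualitative statement \eqref{EQ100}; the paper instead optimizes $n$ and $r$ jointly (via an inequality of the form $\Vert f\Vert_{\Omega}\le A\Vert f\Vert_{\Omega}^{\gamma}\Vert f\Vert_{E}^{1-\gamma}$ with $\gamma\le 1/2$), which is what produces the explicit constants \eqref{EQ301} and \eqref{EQ305}. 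Your route buys simplicity and avoids the ray/separation construction; the paper's route buys explicit dependence on $\kappa$ and $|\Omega|/|E|$ and avoids quoting the Remez-type theorem.

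Two points in your write-up need repair. First, the localization step as you state it is false: a Lebesgue density argument does \emph{not} produce a radius $r^{*}$ depending only on $\Omega$ and $|\Omega|/|E|$ (the scale at which the density of $E$ becomes large depends on the geometry of $E$, not merely on its measure), and density in $B_{r^{*}}(x_0)$ does not transfer to smaller concentric balls, so your Remez step at scale $r\le r^{*}$ is unjustified as written. The correct tool --- used in \cite{AE} and in the paper via its covering lemma --- is a pigeonhole over a cover of $\Omega$ by $\sim(C/r)^d$ balls of radius $r$, which for \emph{every} scale $r$ yields some ball $B_r(x)$ with $|E\cap B_r(x)|\ge c\,|E|\,r^{d}$, i.e.\ relative density $\gtrsim |E|/|\Omega|$ uniformly in $r$; since in your scheme $r$ is chosen last, you must let the ball move with $r$. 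Second, near $\partial\Omega$ the Taylor expansion centered at $x_0$ is problematic: segments joining two points of $B_r(x_0)\cap\Omega$ can leave $\Omega$ even for a $C^1$ graph with small Lipschitz constant, so the integral form of the remainder (which needs \eqref{EQ110} along the whole segment) is not available without recentering at an interior point such as $x+\tfrac{r}{2}e_d$ and exploiting star-shapedness of $B_r(x)\cap\Omega$ with respect to a neighborhood of that point --- this is exactly the role of the point $\tilde{x}$ and the ray argument in the paper, and a comparable geometric lemma is needed in your approach. With these two repairs the argument closes, and the final constant has the claimed dependence on $M,\delta,\kappa,r_0,\sigma,|\Omega|/|E|,\Omega$.
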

\colb

From the proof, we may deduce a more precise information on the
size of the constant in~\eqref{EQ100}. Thus, when $\sigma=1$, we obtain
 \begin{align}
	\begin{split}
		\Vert f\Vert_{L^\infty (\Omega)}
		   \leq
		M \KL^C
		(|\Omega|/|E|)^{C\log \KL}
		\Vert f\Vert_{L^\infty (E)}
		,
	\end{split}
	\label{EQ301}
\end{align}
for a constant $C\geq1$ depending only on $\delta, r_0$, and $\Omega$,
while if $\sigma>1$, then
  \begin{align}
	\begin{split}
		\Vert f\Vert_{L^\infty (\Omega)}
		&\leq
		 M
		 \KL^{C }
%		 C^{\max\{\log_2 \KL, B\}} 
		(|\Omega|/ |E|)^{C\max\{\log \KL, B\}}
		\max\{\log\KL, B\}^{C(\sigma-1) \log \KL}
		\Vert f\Vert_{L^\infty (E)},
	\end{split}
	\label{EQ305}
\end{align}
where $B,C\geq1$ are constants depending only on $\delta, r_0, \sigma$, and $\Omega$; we have assumed that $\kappa\geq2$,
without loss of generality.

As an application, we consider sums of eigenfunctions of the Laplace operator on a Gevrey manifold.
Let $\mathcal{M}\subseteq \mathbb{R}^k$ be a $d$-dimensional smooth compact and connected Riemannian manifold with metric $g$, where $k\in \mathbb{N}$.
%\colr
Fix $\sigma\geq1$, and
let $\mathcal{M}$ be a $\sigma$-Gevrey regular manifold without boundary with a metric that is $\sigma$-Gevrey.
Denote by $\Delta_{\mathcal{M}}$ the Laplace-Beltrami operator on $\mathcal{M}$ and let $\phi_i$ be an eigenfunction of $-\Delta_{\mathcal{M}}$ with the distinct eigenvalue $\lambda_i\geq 1$, i.e., 
\begin{align}
	&
	-\Delta_{\mathcal{M}}
	\phi_i
	= \lambda_i 
	\phi_i
	\inon{in~$\mathcal{M}$}
	,
	\label{EQ24}
\end{align}
for $i=1,\ldots,m$, where $m\in \mathbb{N}$. 
Denote by $h= \sum_{i=1}^m \phi_i$. 
The following theorem provides an observability estimate from a measurable set for the sum of Laplace eigenfunctions satisfying \eqref{EQ24}.

\begin{Theorem}
\label{T02}
Let $\sigma \geq 1$.
Assume that $(\mathcal{M},g)$ is a compact and connected Riemannian manifold that belongs to the $\sigma$-Gevrey class.
Then there exists a constant $C\geq 1$ such that for every measurable subset $E\subset \mathcal{M}$ with positive measure we have
	\begin{align}
		\Vert h\Vert_{L^\infty (\mathcal{M})}
		\leq
		\left(\frac{C}{|E|} \right)^{C\gamma}
		{\gamma}^{C (\sigma-1) \gamma}
		\Vert h\Vert_{L^\infty (E)}
		,
		\llabel{EQ71}
	\end{align}
where
$\lambda=\max \{\lambda_1, \ldots, \lambda_m\}$ and $\gamma= C\sqrt{\lambda}+ Cm^2 \log m +C$.
\end{Theorem}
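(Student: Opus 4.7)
I would reduce Theorem~\ref{T02} to Theorem~\ref{T01} by verifying, for $h=\sum_{i=1}^{m}\phi_i$, the Gevrey bound \eqref{EQ110} and the doubling property \eqref{EQ130} with quantitative constants, then invoking the refined estimate \eqref{EQ305}. Since $\mathcal{M}$ is a compact $\sigma$-Gevrey manifold, the first step is to fix a finite atlas of $\sigma$-Gevrey charts $(U_j,\psi_j)_{j=1}^{N}$, with $N=N(\mathcal{M})$; at least one chart $U_{j_0}$ captures a piece of $E$ of measure $\gtrsim |E|/N$, so it suffices to establish the conclusion on $\Omega_{j_0}:=\psi_{j_0}(U_{j_0})\subset\mathbb{R}^d$ for the chart-wise pull-back of $h$.

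For the Gevrey hypothesis, each $\phi_i$ satisfies the elliptic equation $-\Delta_{\mathcal{M}}\phi_i=\lambda_i\phi_i$ with $\sigma$-Gevrey coefficients, so the Lions-Magenes elliptic iterate theorem (invoked in the introduction) yields, in the chosen chart,
\begin{equation*}
    \|\partial^{\alpha}\phi_i\|_{L^{\infty}(\Omega_{j_0})}
    \leq C_1^{|\alpha|+1}\,\lambda_i^{|\alpha|/2}\,|\alpha|!^{\sigma}\,\|\phi_i\|_{L^{\infty}(\mathcal{M})},
\end{equation*}
with $C_1=C_1(\mathcal{M},g,\sigma)$. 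To exchange $\|\phi_i\|_{L^{\infty}(\mathcal{M})}$ for $\|h\|_{L^{\infty}(\mathcal{M})}$ I would exploit the distinctness of the eigenvalues, isolating each $\phi_i$ from the sum by a Lagrange-type interpolation polynomial in $-\Delta_{\mathcal{M}}$ (or equivalently by a Turán-type inequality applied to $e^{t\Delta_{\mathcal{M}}}h$), combined with the Sobolev bound $\|\phi_i\|_{L^{\infty}}\lesssim \lambda_i^{d/4}\|\phi_i\|_{L^{2}}$ and with the $L^2$-orthogonality $\|\phi_i\|_{L^{2}}\le\|h\|_{L^{2}}\lesssim\|h\|_{L^{\infty}}$. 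Summing in $i$ and absorbing the algebraic prefactors yields \eqref{EQ110} for $h$ with $M$ polynomial in $(m,\lambda)$ and $\delta$ of order $1/\sqrt{\lambda}$. For the doubling condition, I would invoke the Donnelly-Fefferman estimate and its extension to finite eigenfunction sums, obtaining \eqref{EQ130} with $\log\kappa \le C\sqrt{\lambda}+Cm^{2}\log m + C$, where the $m^{2}\log m$ contribution is the price of the spectral-separation step above.

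Plugging these constants into \eqref{EQ305}, the quantity $\max\{\log\kappa,B\}$, with $B=B(\delta,r_0,\sigma,\Omega)\lesssim \sqrt{\lambda}$, is of order $\gamma = C\sqrt{\lambda}+Cm^{2}\log m+C$; since $M$ is polynomial in $(m,\lambda)$ while $\log\kappa\gtrsim \sqrt{\lambda}$, the prefactor $M$ is absorbed by a bounded power of $\kappa$, and a maximum over the $N$ charts completes the proof. The hardest step will be the spectral-separation argument: controlling $\|\phi_i\|_{L^{\infty}}$ by $\|h\|_{L^{\infty}}$ without degrading the Gevrey scale $\delta\sim 1/\sqrt{\lambda}$ or the doubling $\log\kappa\lesssim \sqrt{\lambda}+m^{2}\log m$. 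The trade-off between polynomial-in-$m$ losses from Lagrange interpolation and the $L^{2}\to L^{\infty}$ Sobolev embedding is precisely what produces the $m^{2}\log m$ correction appearing in $\gamma$, and keeping it at this size rather than an exponential $e^{Cm}$ is the technical heart of the argument.
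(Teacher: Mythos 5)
Your overall architecture -- localize to a chart capturing a $|E|/N$-portion of $E$, verify \eqref{EQ110} and \eqref{EQ130} for $h$ with quantitative constants, quote Donnelly for the doubling, and feed everything into \eqref{EQ301}/\eqref{EQ305} -- is exactly the paper's strategy. The gap is in how you propose to obtain the Gevrey bound \eqref{EQ110} for $h$. You apply the elliptic iterates theorem to each $\phi_i$ separately and then try to dominate $\Vert \phi_i\Vert_{L^\infty(\mathcal{M})}$ by $\Vert h\Vert_{L^\infty(\mathcal{M})}$ via a Lagrange interpolation polynomial in $-\Delta_{\mathcal{M}}$ (or a Tur\'an inequality for $e^{t\Delta_{\mathcal{M}}}h$). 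That step cannot be closed with constants depending only on $m$ and $\lambda$: the interpolation denominators $\prod_{j\neq i}(\lambda_i-\lambda_j)$ have no lower bound in terms of $m$ and $\lambda$, since nothing prevents distinct eigenvalues from being arbitrarily close, and the Tur\'an route suffers from the same lack of frequency separation. You have also misattributed the $m^2\log m$ term: it is not produced by any separation argument in the Gevrey step, but comes entirely from Donnelly's doubling theorem \cite[Theorem~4.1]{D}, which the paper cites as a black box. The paper avoids eigenfunction separation altogether: it applies $\Delta_{\mathcal{M}}^n$ to $h$ as a whole and uses only $L^2$-orthogonality, $\Vert\Delta_{\mathcal{M}}^n h\Vert_{L^2}^2=\sum_i\lambda_i^{2n}\Vert\phi_i\Vert_{L^2}^2\leq\lambda^{2n}\Vert h\Vert_{L^2}^2$ (see \eqref{EQ64}), then converts $\Vert h\Vert_{L^2(\mathcal{M})}$ back to a local $L^2$ norm using the doubling chain, so that Lemma~\ref{L02} applies directly to $h$. (Your parenthetical mention of $\Vert\phi_i\Vert_{L^2}\leq\Vert h\Vert_{L^2}$ plus Sobolev would in fact rescue your route with only polynomial losses in $(m,\lambda)$ -- but then the interpolation/Tur\'an step you single out as the technical heart is superfluous, and the argument should be restructured around orthogonality from the start.)

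A second, quantitative problem: you claim \eqref{EQ110} for $h$ with $\delta$ of order $\lambda^{-1/2}$. The paper instead absorbs the factor $\lambda^n$ into $(2n)!^\sigma$ at the price of a prefactor $e^{\sigma\lambda^{1/2\sigma}}\leq C^\gamma$ (see \eqref{EQ71}), so that Lemma~\ref{L02} is invoked with $M_2=1$ and yields \eqref{EQ110} with $\delta=1/\bar M_2$ a fixed constant and only $M=C^\gamma\bar M_1$ growing. This matters: in \eqref{EQ34} the factor $(1/\delta)^{\log_2\kappa}$ appears, and with $\delta\sim\lambda^{-1/2}$ and $\log_2\kappa\sim\gamma$ this contributes $\lambda^{C\gamma}=\gamma^{C\gamma}$ even when $\sigma=1$, which is incompatible with the stated conclusion, where the factor $\gamma^{C(\sigma-1)\gamma}$ disappears for $\sigma=1$. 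Keeping $\delta$ independent of $\lambda$ (and pushing all $\lambda$-dependence into $M$, whose logarithm is $O(\gamma)$ and therefore harmless) is the correct bookkeeping.
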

\colb

We emphasize that both $m$ and $\lambda$ are allowed to be arbitrarily large and the constant $C$ is independent of $E$, $m$, and~$\lambda$.

The statement and the proof of Theorem~\ref{T01} can be adapted to
prove an observability estimate from a measurable set with other types
of quantitative unique continuation properties.
Here, suppose that there exist constants $a,b> 0$ and $r_0>0$ such that 
\begin{align}
	\Vert f\Vert_{L^\infty (\Omega )}
	\leq
	\exp \left(
	\frac{a}{r^b}
	\right)
	\Vert f\Vert_{L^\infty (B_r (x) \cap \Omega)}
	\comma
	r\in (0,r_0]
	\comma
	x\in \Omega.
	\label{EQ030}
\end{align}
The assumption \eqref{EQ030} is known for solutions of a one-dimensional parabolic equation of arbitrary order with Gevrey coefficients; see~\cite{IK1} for the motivation and other related equations.

The following theorem provides an observability estimate from a measurable set for functions~$f$ satisfying \eqref{EQ110} and~\eqref{EQ030}.

\cole
\begin{Theorem}
	\label{T03}
Suppose that $\Omega$ is a $C^1$ domain, and assume that
$f\in C^\infty (\Omega)$ satisfies \eqref{EQ110} and~\eqref{EQ030},
where
$M,\delta, a,b,r_0>0$ are constants such that
$1\leq \sigma < 1+1/b$.
Then, for any measurable set $E\subset \Omega$ with  positive measure, we 
have
	\begin{align}
		\Vert f\Vert_{L^\infty (\Omega )}
		\leq
		C 
		\Vert f\Vert_{L^\infty (E)}
		,
		\llabel{EQ80}
	\end{align}
	where $C=C(M,\delta,a,b, r_0, \sigma, |\Omega|/|E|, \Omega)>0$ is a constant.
\end{Theorem}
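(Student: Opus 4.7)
My plan is to re-run the proof of Theorem~\ref{T01} with essentially the same polynomial-approximation and Remez-type structure in place, changing only the final propagation step so as to use~\eqref{EQ030} in lieu of the iterated doubling~\eqref{EQ130}. All the preparatory ingredients in the proof of Theorem~\ref{T01} are insensitive to whether~\eqref{EQ130} or~\eqref{EQ030} is assumed, and I would simply import them: first, the Apraiz--Escauriaza separation-point argument that extracts, from any measurable $E \subset \Omega$ of positive measure, a ball $B_\rho(x_0) \subset \Omega$ in which $E$ has density bounded below by a quantity depending only on $|\Omega|/|E|$ and $\Omega$; second, the Gevrey control of the Taylor remainder at order $N$, which by \eqref{EQ110} and Stirling gives $\varepsilon_N := \|f-P_N\|_{L^\infty(B_\rho(x_0))} \leq C M ((N+1)^{\sigma-1} \rho/\delta)^{N+1} \|f\|_{L^\infty(\Omega)}$; and third, a Brudnyi--Ganzburg inequality which transfers $\|P_N\|_{L^\infty(E \cap B_\rho(x_0))}$ to $\|P_N\|_{L^\infty(B_\rho(x_0))}$ at cost $A_N = (C |\Omega|/|E|)^{cN}$.

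Combining these and invoking~\eqref{EQ030} at $x_0$ with radius $\rho$, in place of the doubling chain used in Theorem~\ref{T01}, produces
\[
    \|f\|_{L^\infty(\Omega)} \leq e^{a/\rho^b} A_N \|f\|_{L^\infty(E)} + e^{a/\rho^b} A_N \varepsilon_N \|f\|_{L^\infty(\Omega)}.
\]
The conclusion then follows as soon as $N$ and $\rho$ are chosen so that $e^{a/\rho^b} A_N \varepsilon_N \leq 1/2$, since then the last term can be absorbed into the left and one reads off the observability constant $C = 2 e^{a/\rho^b} A_N$ with the parameter dependence claimed in the theorem.

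The main obstacle, and the reason for the hypothesis $\sigma<1+1/b$, is precisely this final scalar balancing. For $\sigma>1$, the Gevrey remainder $\varepsilon_N$ is minimised at roughly $N \sim c(\delta/\rho)^{1/(\sigma-1)}$, yielding $\log\varepsilon_N \lesssim -c\rho^{-1/(\sigma-1)}$. In order for $e^{a/\rho^b} A_N \varepsilon_N < 1/2$ as $\rho\to 0$ one then needs $a\rho^{-b} + cN\log(|\Omega|/|E|) \leq c'\rho^{-1/(\sigma-1)}$, which requires $b \leq 1/(\sigma-1)$, equivalently $\sigma \leq 1+1/b$; the strict inequality in the hypothesis provides a margin sufficient to also absorb the Remez contribution by taking $\rho$ small enough. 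In the case $\sigma=1$ the remainder is purely geometric in $\rho/\delta$, and any $N$ larger than a constant multiple of $a\rho^{-b}/|\log(\rho/\delta)|$ works without any constraint on $b$. Everything upstream of this final optimisation is inherited verbatim from the proof of Theorem~\ref{T01}.
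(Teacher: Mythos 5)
Your proposal is correct and takes essentially the same route as the paper: the paper likewise recycles the Theorem~\ref{T01} machinery (with the one-dimensional ray/separation-point/Lagrange-interpolation step playing the role of your Brudnyi--Ganzburg inequality, which in particular handles the case where the selected ball meets $\partial\Omega$ and is therefore not contained in $\Omega$), replaces the doubling chain by a single application of \eqref{EQ030} to reach \eqref{EQ113}, and then performs exactly your scalar balancing. The paper's choice $r=10(b/(n+1))^{1/b}$ turns $e^{a/(r/10)^b}$ into $e^{a(n+1)/b}$ and makes the Gevrey remainder factor decay like $(n+1)^{-(n+1)(1/b-\sigma+1)}$, so the hypothesis $\sigma<1+1/b$ enters precisely as you identify, to guarantee that this superexponential decay dominates both the exponential from \eqref{EQ030} and the $(C|\Omega|/|E|)^{n}$ factor.
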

\colb
Throughout this paper, we use $C\geq1$ to denote a sufficiently large constant that may depend on $d$ and $\Omega$ without mention; the value of $C$ may vary from line to line.
All three theorems are proven in the next section.

\startnewsection{Proof of the main results}{sec03}
First, we prove the following lemma, which estimates the number of balls needed to cover the domain~$\Omega$.

\cole
\begin{Lemma}
\label{L00}
	Let $\Omega$ be a bounded domain in $\mathbb{R}^d$.
	There exists a constant $C>0$ such that
	 for any $r>0$ we have an open cover
	\begin{align}
		\Omega
		\subset
		\cup_{i=1}^{([C/r]+1)^d}
		B_{r} (x_i),
		\llabel{EQ400}
	\end{align}
where $x_i\in \Omega$ for $i=1,\ldots, ([C/r]+1)^d$. 
\end{Lemma}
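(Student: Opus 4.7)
The plan is to use a standard grid argument, since the lemma is essentially a quantitative statement of total boundedness for a bounded subset of $\mathbb{R}^d$. Because $\Omega$ is bounded, we may fix a constant $L=L(\Omega)>0$ such that $\Omega\subset [-L/2,L/2]^d$; any $L\geq \diam\Omega$ suffices (after a translation, which does not affect the statement).

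Next, I would tile $\mathbb{R}^d$ by the standard collection of half-open cubes of side length $r/\sqrt d$, so that each cube has diameter exactly $r$. Let $\mathcal{Q}$ be the subfamily of those cubes which intersect $\Omega$. Since each such cube is contained in the $r$-neighborhood of $\Omega$, and in particular in $[-L/2-r,L/2+r]^d$, the number of cubes in $\mathcal{Q}$ is at most
\begin{align*}
    \left(\frac{L+2r}{r/\sqrt d}+1\right)^d
    \leq
    \left(\left[\frac{C}{r}\right]+1\right)^d
\end{align*}
for a constant $C=C(d,\Omega)>0$, which is the desired cardinality bound.

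For each $Q\in\mathcal{Q}$, I would choose a point $x_Q\in Q\cap\Omega$ (non-empty by construction). Given any $y\in\Omega$, the point $y$ lies in some tile $Q\in\mathcal{Q}$, and since $|y-x_Q|\leq\diam Q=r$, we get $y\in B_r(x_Q)$. This produces the required cover with centers in~$\Omega$.

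I do not expect any real obstacle here: the only minor point to be careful about is that the centers must lie in $\Omega$, which is ensured by selecting $x_Q$ from $Q\cap\Omega$ rather than from the center of the cube. The constant $C$ naturally absorbs the factor $\sqrt d$ and the dimensions of~$\Omega$, and if fewer than $([C/r]+1)^d$ cubes are needed one can simply repeat one of the centers to pad the collection.
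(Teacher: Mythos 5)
Your grid argument is correct and is the standard proof of this statement; the paper in fact omits the proof entirely, remarking only that it is straightforward, so there is nothing to compare against. The one detail to tighten is the regime of large $r$: your cube count is bounded below by roughly $(2\sqrt d+1)^d$ as $r\to\infty$, whereas $([C/r]+1)^d=1$ once $r>C$, so the displayed inequality as written cannot hold for all $r>0$; this is harmless because for $r\geq \diam\Omega$ a single ball centered at any point of $\Omega$ already covers $\Omega$, so one treats $r\geq\diam\Omega$ trivially and applies your tiling count only for $r<\diam\Omega$, where a constant $C=C(d,\Omega)$ absorbing $\sqrt d$, $L$, and $\diam\Omega$ does the job.
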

\colb

Since the proof is straightforward, we omit it.

\begin{proof}[Proof of Theorem~\ref{T01}]
First, we prove an observability estimate for a ball $B_r (\hat{x})$, where $\hat{x}\in \Omega$ and $r\in (0,r_0]$.	
Let $\bar{x} \in \Omega$ be such that $|f(\bar{x})|\geq \sup_{\Omega} |f|/2$. 
Since $\Omega$ is connected, we may join $\bar{x}$ and $\hat{x}$ by an overlapping chain of balls with radius $\hat{r}=r2^{[\log_2 (r_0/r)]}$, where we use $[\cdot]$ to denote the integer part.
Without loss of generality, we may assume that $r_0\leq 1$.
Using \eqref{EQ130}, we obtain
\begin{align}
	|f(\bar{x})|
	\leq
	\Vert f\Vert_{L^\infty (B_{\hat{r}} (\bar{x}) \cap \Omega)}
	\leq 
	\KL^{C/ \hat{r}}
	\Vert f\Vert_{L^\infty (B_{\hat{r}} (\hat{x}) \cap \Omega )}
	\leq 
	\KL^{C}
	\Vert f\Vert_{L^\infty (B_{\hat{r}} (\hat{x}) \cap \Omega )},
	\label{EQ170}
\end{align}
where $C>0$ is a constant depending on $r_0$.
We use \eqref{EQ130} 
on concentric balls centered at $\hat{x}$ to obtain
\begin{align}
	\Vert f\Vert_{L^\infty (B_{\hat{r}} (\hat{x})\cap \Omega)}
	\leq
	\KL^{[\log_2 (r_0/r)]}	
	\Vert f\Vert_{L^\infty (B_r (\hat{x})\cap \Omega)}
	.
	\label{EQ172}
\end{align}
Combining \eqref{EQ170} and \eqref{EQ172}, we arrive at
\begin{align}
	\begin{split}
		\Vert f\Vert_{L^\infty (\Omega)}
		&
		\leq
		2 |f(\bar{x})|
		\leq
		2 
		\KL^{C+[\log_2 (r_0/r)]}
		\Vert f\Vert_{L^\infty (B_{r} (\hat{x})\cap \Omega)}
%		\\&
		\leq
		\frac{
			\KL^{C}}{r^{\log_2 \KL}}
		\Vert f\Vert_{L^\infty (B_{r} (\hat{x})\cap \Omega)},
		\label{EQ163}	
	\end{split}
\end{align}
for any $r\in (0,r_0]$.	
	
Let $\epsilon=1/100$.
For each $z_\alpha \in \partial \Omega$, there exists a local coordinate system $(z', z_d) \in \mathbb{R}^{d-1}\times \mathbb{R}$, a constant $R_\alpha>0$, and a $C^1$ function $\phi_\alpha: \mathbb{R}^{d-1}\to \mathbb{R}$ such that:
\begin{enumerate}[label=(\roman*)]
	\item $B_{R_\alpha} (z_\alpha) \cap \Omega =\{ (z',z_d)\in B_{R_\alpha} (z_\alpha): z_d > \phi_\alpha (z')\}$,
	\item $B_{R_\alpha} (z_\alpha) \cap \partial \Omega =\{ (z',z_d)\in B_{R_\alpha} (z_\alpha): z_d = \phi_\alpha (z')\}$, and
	\item $|\phi_\alpha (z_1')- \phi_\alpha (z_2') |\leq \epsilon |z_1'-z_2'|$, for any $(z_1', \phi_\alpha (z_1')), (z_2', \phi_\alpha (z_2')) \in B_{R_\alpha} (z_\alpha)$.
\end{enumerate}
It is clear that $\partial \Omega\subset \cup_{z_\alpha \in \partial \Omega} B_{R_\alpha/2} (z_\alpha)$.
Since $\partial \Omega$ is compact, there exists a finite open cover 
\begin{align}
	\partial \Omega \subset \cup_{j=1}^N B_{R_j/2} (z_j).
	\label{EQ401}
\end{align}
Denote $R_0=\min\{R_1,\ldots,R_N\}/4$ and $\tilde{r}_0=\min \{R_0, r_0 \}$.
Let $r\in (0, \tilde{r}_0]$ be a small constant, to be determined below.
Using Lemma~\ref{L00}, there exists an open cover
\begin{align}
	\Omega
	\subset
	\cup_{j=1}^{([C/r]+1)^d}
	B_{r} (y_j),
	\llabel{EQ400}
\end{align}
where $y_j\in \Omega$ for $j=1,\ldots, ([C/r]+1)^d$.
Since $E\subset \Omega$, there exists a ball $B_r (x)$ with $x\in \Omega$ such that
\begin{align}
	|B_r (x) \cap E |
	\geq 
	\frac{|E|}{([C/r]+1)^d}
	\geq 
	\frac{|E|}{(C/r)^d}
	=
	\frac{|E| r^d}{C}
	.
	\label{EQ40}
\end{align}

First, consider the case when $B_r(x) \cap \partial \Omega \neq \emptyset$. 
Let $z\in B_r(x)\cap \partial\Omega$. From \eqref{EQ401}, there exists $j\in \{1,\ldots, N\}$ such that $z\in B_{R_j/2} (z_j)$.
For any $y\in B_r(x)$, we have
\begin{align}
	|z_j-y|
	\leq
	|z_j-z|+|z-x|+|x-y|
	<
	\frac{R_j}{2}
	+
	r
	+
	r
	\leq
	R_j,
	\llabel{EQ402}
\end{align}
where we used $r\leq \min\{R_1,\ldots,R_N\}/4$ in the last inequality. 
Thus, we conclude that $B_r (x)\subset B_{R_j} (z_j)$, which leads to $B_r (x)\cap \Omega\subset B_{R_j} (z_j) \cap \Omega$ and $B_r (x)\cap \partial\Omega\subset B_{R_j} (z_j) \cap \partial\Omega$.
It is clear that $B_{r/10} (x+r e_d/2)\subset \Omega$, where $e_d$ is the vertical unit vector in the local coordinate system. 
Denote $\tilde{x}=x+r e_d/2$.
Since the graph of $B_{R_j} (z_j) \cap \partial\Omega$ in the local coordinate system has a gradient at most $\epsilon=1/100$, we follow the arguments in \cite{KN} to conclude that each ray starting from any point in $B_{r/10} (\tilde{x})$ intersects $B_{r} (x)\cap \partial \Omega$ at most once.

\begin{center}
	\tikzset{every picture/.style={line width=0.75pt}} %set default line width to 0.75pt        
	
	\begin{tikzpicture}[x=0.75pt,y=0.75pt,yscale=-0.5,xscale=0.5]
		%uncomment if require: \path (0,300); %set diagram left start at 0, and has height of 300
		
		%Curve Lines [id:da2977418414297657] 
		\draw    (126.5,240) .. controls (258.5,206) and (311.5,235) .. (482.5,228) ;
		%Shape: Circle [id:dp8524883614638319] 
		\draw   (230,174.75) .. controls (230,135.68) and (261.68,104) .. (300.75,104) .. controls (339.82,104) and (371.5,135.68) .. (371.5,174.75) .. controls (371.5,213.82) and (339.82,245.5) .. (300.75,245.5) .. controls (261.68,245.5) and (230,213.82) .. (230,174.75) -- cycle ;
		%Straight Lines [id:da538204623690896] 
		\draw  [dashed]  (300, 175) -- (236.5,201) ;
		%Shape: Circle [id:dp6925545859943728] 
		\draw   (146.25,145.75) .. controls (146.25,65.39) and (211.39,0.25) .. (291.75,0.25) .. controls (372.11,0.25) and (437.25,65.39) .. (437.25,145.75) .. controls (437.25,226.11) and (372.11,291.25) .. (291.75,291.25) .. controls (211.39,291.25) and (146.25,226.11) .. (146.25,145.75) -- cycle ;
		%Straight Lines [id:da3013084060056538] 
		\draw  [dashed]  (291,145.75) -- (193.5,40) ;
		
		% Text Node
		% Text Node
		% Text Node
		% Text Node
		\draw (252,171.4) node [anchor=north west][inner sep=0.75pt]    {$r$};
		% Text Node
		\draw (238,69.4) node [anchor=north west][inner sep=0.75pt]    {$2r$};
		% Text Node
		\draw (276,150) node [anchor=north west][inner sep=0.75pt]    {$w$};
		% Text Node
		\draw (429,41.4) node [anchor=north west][inner sep=0.75pt]    {$\Omega $};
		% Text Node
		\draw (492,216.4) node [anchor=north west][inner sep=0.75pt]    {$\partial \Omega $};
		
		\filldraw[black] (301,144.4) circle (1.5pt) node[anchor=west]{$\tilde{x}$};
		
		\filldraw[black] (291,145.75) circle (1.5pt) node[anchor=west]{};
		
			\filldraw[black] (300, 175) circle (1.5pt) node[anchor=west]{$x$};
	\end{tikzpicture}
\end{center}
Let $w\in B_{r/10} (\tilde{x})$ be such that $|f(w)|\geq \sup_{B_{r/10} (\tilde{x})} |f|/2$. 
It is clear that $B_r(x)\subset B_{2r} (w)$.
Using the spherical coordinates centered at $w$ with radius $2r$, we have
\begin{align}
|B_r (x) \cap E |
\leq
Cr^{d-1}
\int_{S^{d-1}}
\bigl|\{t\in [0,2r]: w+t \mu \in B_r(x)\cap E\}\bigr|
\, d\mu
.
\label{EQ41}
\end{align}
From \eqref{EQ40} and \eqref{EQ41}, it follows that
there exists $\mu_0 \in S^{d-1}$ such that
\begin{align}
\bigl| \{t\in [0,2r]: w+t\mu_0
\in B_r(x)\cap E\} \bigr|
\geq
\frac{|B_r (x)\cap E|}{Cr^{d-1}}
\geq
\frac{|E| r}{C}
.
\label{EQ14}
\end{align}
Note that each ray that starts from $w\in B_{r/10} (\tilde{x})$ intersects $B_{r} (x)\cap \partial\Omega$ at most once. 
Consequently, we deduce that $L:=\{w+t\mu_0: t\in [0, 2r] \} \cap B_r (x) \cap \Omega$ is an interval. From \eqref{EQ14}, we infer that
\begin{align}
	|L\cap E|
	=
	\bigl| \{t\in [0,2r]: w+t\mu_0
	\in B_r(x)\cap E\} \bigr|
	\geq
	\frac{|E| r}{C}.
	\label{EQ502}
\end{align}
With $n\in \mathbb{N}_0$,
choose $n+1$ points $\{x_0,x_1, \ldots, x_n\} \subset \overline{L\cap E}$ such that 
\begin{align}
	|x_i - x_{i-1}| 
	\geq 
	\frac{ |L\cap E |}{n+1}
	\geq 
	\frac{|E|r}{C(n+1)}
	\comma
	i=1,\ldots,n,
	\llabel{EQ15}
\end{align}
where we used \eqref{EQ502}.
To see that we can properly separate the points, we first rotate the interval $L$ and then set $x_0= \inf (\overline{L\cap E})$ and
\begin{align}
	x_i
	=
	\inf
	\left\{
	\overline{L\cap E}
	\cap
	\left[x_{i-1}
	+ \frac{|L\cap E|}{n+1}
	,\infty\right)
	\right\}
	\comma
	i=1,\ldots,n.
	\llabel{EQ108}
\end{align}
Let 
\begin{align}
P(t)
=
\sum_{i=0}^n
f(x_i)
\frac{\prod_{j\neq i} (t-x_j)}{\prod_{j\neq i} (x_i - x_j)}
\llabel{EQ19}
\end{align}
be the Hermite interpolation polynomial defined on the interval $L$,
which satisfies $P(x_i)=f(x_i)$, for $i=0,1,\ldots,n$.
It is clear that $|L|\leq 2r$, so it follows
that
\begin{align}
\begin{split}
\Vert P\Vert_{L^\infty (L)}
\leq
\Vert f\Vert_{L^\infty (E)}
\frac{(2r)^n }{ ( |E| r)^n}
\sum_{i=0}^n
\frac{ (C (n+1) )^{n}}{i!(n-i)!}
\leq
\frac{C^{n+1} \Vert f\Vert_{L^\infty (E)}
}{|E|^n }
,
\label{EQ20}	
\end{split}
\end{align}
where we used
  \begin{equation}
    \biggl|   \prod_{j\neq i} (x_i - x_j)\biggr|
    \geq
    i!(n-i)!
    \left(
      \frac{|E| r}{C (n+1)}
    \right)^{n}
    \comma
    i=0,1,\ldots,n
   \label{EQ04}
  \end{equation}
in the first inequality and 
the Stirling formula in the second. 
By the Hermite interpolation theorem, for any $t\in L$ there exists some $s \in L$ such that
\begin{align}
f(t)- P(t)
=
\frac{(t-x_0) (t-x_1) \cdots (t-x_n)}{(n+1)!}
f^{(n+1)} (s)
.
\label{EQ10}
\end{align}
Note that $f$ can be considered as a function of a single variable defined on $L$, and we thus use $f^{(m)}$ to denote the $m$-th derivative of $f$.
Thus, from \eqref{EQ110} and \eqref{EQ10}, it follows that
\begin{align}
\begin{split}
	\Vert f- P\Vert_{L^\infty (L)}
	&
	\leq
	\frac{(2r)^{n+1} \Vert f^{(n+1)} \Vert_{L^\infty (L)} }{(n+1)!}
	\leq	
	\frac{	M
	C^{n+1}  r^{n+1} (n+1)!^{\sigma-1} 		\Vert f\Vert_{L^\infty (\Omega)}}{\delta^{n+1}}	
	.
	\label{EQ21}
\end{split}
\end{align}
Now, we appeal to \eqref{EQ163}, which yields
\begin{align}
	\begin{split}
		\Vert f\Vert_{L^\infty (\Omega)}
	&
	\leq
	\KL^{C} (r/10)^{-\log_2 \KL}
	\Vert f\Vert_{L^\infty (B_{r/10} (\tilde{x}))}
	\leq
	2
	\KL^{C} r^{-\log_2 \KL}
	|f(w)|
	\leq
	\KL^{C} r^{-\log_2 \KL}
	\Vert f\Vert_{L^\infty (L)}
	,
	\label{EQ150}
	\end{split}
\end{align}
since $\KL\geq 2$.
From \eqref{EQ20} and \eqref{EQ21}--\eqref{EQ150}, it follows that
\begin{align}
\begin{split}
	\Vert f\Vert_{L^\infty (\Omega)}
	&
	\leq
	\KL^{C} r^{-\log_2 \KL}
(	\Vert P\Vert_{L}
	+
	\Vert f-P\Vert_{L})
	\\&
	\leq
	\KL^{C} r^{-\log_2 \KL}
	\left(
	\frac{ C^{n+1}	\Vert f\Vert_{L^\infty (E)}
	}{|E|^n }
	+
	\frac{	M
	C^{n+1}
	r^{n+1} (n+1)^{(n+1)(\sigma-1)} 
	\Vert f\Vert_{L^\infty (\Omega)}}{\delta^{n+1}}
	\right)
	\\&
	\leq
	\KL^{C} r^{-\log_2 \KL}
	C^{n+1}
	\bigl(
	(|\Omega|/|E|)^{n} \Vert f\Vert_{L^\infty (E)}
	+
	M (1/\delta)^{n+1}  r^{n+1}
	(n+1)^{(n+1)(\sigma-1)}
	\Vert f\Vert_{L^\infty (\Omega)}
	\bigr).
	\label{EQ13}
\end{split}
\end{align}
Note that \eqref{EQ13} holds for any $n\in \mathbb{N}_0$ and $r\in (0,\tilde{r}_0]$. 

First, we consider the case $\sigma=1$, when we set
\begin{align}
	r=
	\tilde{r}_0
	\left(
	\frac{\Vert f\Vert_{L^\infty (E)}}{M \Vert f\Vert_{L^\infty (\Omega)}}
	\right)^{1/(n+1)}.
	\label{EQ103}
\end{align}
It is clear that $r\leq \tilde{r}_0$ in \eqref{EQ103}, for any $n\in \mathbb{N}_0$.
Inserting $n=n_0=2[\log_2 \KL]+2$ and \eqref{EQ103} into the far-right side of \eqref{EQ13}, we arrive at
\begin{align}
	\begin{split}
		\Vert f\Vert_{L^\infty (\Omega)}
		&
		\leq
		\KL^C
		\tilde{r}_0^{-\log_2 \KL}
		C^{n_0+1}
		\Vert f\Vert_{L^\infty (E)}
		\bigl( (|\Omega|/|E|)^{n_0} 
		+
		(1/\delta)^{n_0+1}
		\tilde{r}_0^{n_0+1}
		\bigr) 
		\left(
		\frac{M \Vert f\Vert_{L^\infty (\Omega)}}{\Vert f\Vert_{L^\infty (E)}}
		\right)^{(\log_2 \KL)/(n_0+1)}
		\\&
		\leq
		\KL^C (|\Omega|/|E|)^{2[\log_2 \KL]+2} 
		C^{[\log_2 \KL]+1}
%		( 1+ (\tilde{r}_0/\delta)^{[\log_2 \KL]+2}) 
		M^{\gamma}
		\Vert f\Vert_{L^\infty (\Omega)}^\gamma
		\Vert f\Vert_{L^\infty (E)}^{1-\gamma},
		\llabel{EQ104}
	\end{split}
\end{align} 
where $\gamma=(\log_2 \KL)/(2[\log_2 \KL] +3)$.
Note that $0<\gamma\leq 1/2$.
Thus, we get
  \begin{align}
  \begin{split}
	\Vert f\Vert_{L^\infty (\Omega)}
		&
   \leq
   \big(M^\gamma \KL^C
   (|\Omega|/|E|)^{2[\log_2 \KL]+2}
   \big)^{1/(1-\gamma)}
   \Vert f\Vert_{L^\infty (E)}
%   \\&
   \leq
   M
   \KL^C
   (|\Omega|/|E|)^{C\log \KL}
   \Vert f\Vert_{L^\infty (E)}
   ,
  \end{split}
   \label{EQ01}
  \end{align}
completing the proof in the case
$\sigma=1$.

When $\sigma>1$, we set
\begin{align}
	r= 
	\left(
	\frac{ \delta^{n+1}\Vert f\Vert_{L^\infty (E)}}{ M (n+1)^{(n+1) (\sigma-1)} \Vert f\Vert_{L^\infty (\Omega)}} \right)^{1/(n+1)}
	\label{EQ30},
\end{align}
where $n\in \mathbb{N}_0$ is a sufficiently large integer to be determined below;
in particular, we need to choose it so that the right-hand side of~\eqref{EQ30}
is less than or equal to~$\tilde{r}_0$.
Inserting \eqref{EQ30} into the far-right side of \eqref{EQ13}, we obtain
\begin{align}
\begin{split}
	\Vert f\Vert_{L^\infty (\Omega)}
	&
	\leq
	\KL^C C^{n+1}
	( (|\Omega|/|E|)^n+1)
	\Vert f\Vert_{L^\infty (E)}
	\left(
	\frac{ M (n+1)^{(n+1) (\sigma-1)} \Vert f\Vert_{L^\infty (\Omega)}}{ \delta^{n+1}\Vert f\Vert_{L^\infty (E)}} \right)^{(\log_2 \KL)/(n+1)}
	\\&
	\leq
	\KL^C
	C^{n+1} 
	 (|\Omega|/|E|)^{n} (n+1)^{(\sigma-1) \log_2 \KL }
	 \\&\indeq\indeq
	 \times
	 M^{(\log_2 \KL)/(n+1)}
	 (1/\delta)^{\log_2 \KL}
	\Vert f\Vert_{L^\infty (\Omega)}^{(\log_2 \KL)/(n+1)}
	\Vert f\Vert_{L^\infty (E)}^{1-(\log_2 \KL) /(n+1)}
	.
\label{EQ34}
\end{split}
\end{align}
Let 
\begin{align}
		n_1=
	2\bigl[\max\bigl\{\log_2 \KL, (\delta \tilde{r}_0^{-1})^{1/(\sigma-1)}\bigr\}\bigr]+1.
	\llabel{EQ151}
\end{align}
Note that this choice of $n_1$ ensures that $r\leq \tilde{r}_0$ in \eqref{EQ30} and $\eta=(\log_2 \KL)/(n_1+1) \in (0,1/2]$.
Inserting $n=n_1$ into~\eqref{EQ34}, we conclude that
\begin{align}
	\begin{split}
	\Vert f\Vert_{L^\infty (\Omega)}
	\leq
	\KL^C C^{n_1+1} 
	(|\Omega|/ |E|)^{n_1}
	 (n_1+1)^{(\sigma-1) \log_2 \KL}
	M^{\eta}	
	\Vert f\Vert_{L^\infty (\Omega)}^{\eta}
	\Vert f\Vert_{L^\infty (E)}^{1-\eta}
	.
	\llabel{EQ105}
	\end{split}
\end{align}
Therefore, we obtain
  \begin{align}
  \begin{split}
   \Vert f\Vert_{L^\infty (\Omega)}
    &\leq
     (	\KL^C C^{n_1+1} 
          (|\Omega|/ |E|)^{n_1}
     M^{\eta}	
     (n_1+1)^{(\sigma-1) \log_2 \KL}
       )^{1/(1-\eta)}
    \Vert f\Vert_{L^\infty (E)}
    \\
    &
    \leq
    M	\KL^{C} 
%    	C^{\max\{\log_2 \KL, B\}}
    (|\Omega|/ |E|)^{C\max\{\log \KL, B\}}
    \max\{\log \KL, B\}^{C(\sigma-1) \log \KL }
    \Vert f\Vert_{L^\infty (E)}
    ,
  \end{split}
   \label{EQ05}
  \end{align}
where $B=(\delta \tilde{r}_0^{-1})^{1/(\sigma-1)}$.

From \eqref{EQ34} and \eqref{EQ05}, we conclude the proof of the theorem in the case when $B_r (x)\cap \partial \Omega\neq \emptyset$.
On the other hand, if $B_r (x)\cap \partial \Omega= \emptyset$, it follows that $B_r(x)\subset \Omega$.
Let $w\in B_r (x)$ be such that $|f(w)|\geq \sup_{B_r (x)} |f|/2$. 
Proceeding analogously to \eqref{EQ41}--\eqref{EQ05}, we complete the proof.
\end{proof}

Next, we aim to prove Theorem~\ref{T02}.
Consider a second-order differential operator
\begin{align}
	\mathcal{A}
	u
	=
	\sum_{|\alpha|\leq 2}
	a_\alpha (x)
	D^\alpha u,
	\label{EQ74}
\end{align}
where $a_\alpha(x)$ is defined on a bounded open set $\Omega\subset \mathbb{R}^d$.
We adopt the usual multi-index notation that
$D^\alpha= D_{x_1}^{\alpha_1} \cdots D_{x_d}^{\alpha_d}$ for $\alpha=(\alpha_1,\ldots,\alpha_d)\in \mathbb{N}_0^d$ and $|\alpha|=\alpha_1+\cdots+ \alpha_d$. Let $\sigma\geq 1$.
We shall state general assumptions which make $\mathcal{A}$ a $\sigma$-Gevrey-regular elliptic operator. 
Note that these restrictions shall be used only implicitly (they are the hypotheses needed for \cite[Theorem VIII.2.4]{LM2}).

\textit{Assumptions on the Gevrey regularity of $\mathcal{A}$}:
\begin{enumerate}[label=(\roman*)]
	\item The coefficients $a_\alpha$ of $\mathcal{A}$ are $\sigma$-Gevrey on a neighborhood of $\bar{\Omega}$;
	
%	\item $\partial \Omega$ is of class $\sigma$-Gevrey;
	
%	\item $\Omega$ is locally on one side of $\partial \Omega$;
	
	\item $\mathcal{A}$ is properly elliptic in $\bar{\Omega}$.
	
%	\item $\mathcal{B}$ is a normal boundary system, namely, $\partial \Omega$ is non-characteristic to $\mathcal{A}$ at every point of $\partial \Omega$;
%	
%	\item $\mathcal{B}$ covers $\mathcal{A}$ on $\partial \Omega$.
\end{enumerate}

We recall the interior elliptic iterates theorem due to Lions and Magenes.
\cole
\begin{Lemma}[\cite{LM2}]
	\label{L02}
	Let $\sigma\geq 1$ and $\Omega$ be an arbitrary bounded domain in $\mathbb{R}^d$.
	Suppose that $u\in C^\infty (\bar{\Omega})$ and there exist constants $M_1, M_2>0$ (depending on $u$) such that
	\begin{align}
		\Vert \mathcal{A}^n u\Vert_{L^2 (\Omega)}
		\leq
		M_1 M_2^n (2n)!^\sigma
		\comma
		n \in \mathbb{N}_0
		.
	\end{align}
	Then for any compact subset $K \subset \Omega$, we have
	\begin{align}
		\Vert D^\alpha u \Vert_{L^\infty (K)} 
		\leq
		\bar{M}_1 \bar{M}_2^{|\alpha|}
		|\alpha|!^\sigma 
		\comma
		\alpha \in \mathbb{N}_0^d
		,
	\end{align}
	where $\bar{M}_1, \bar{M}_2>0$ are constants.
\end{Lemma}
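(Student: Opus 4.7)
The plan is to adapt the standard Lions--Magenes interior elliptic iterates argument: one passes from the hypothesis on $\|\mathcal{A}^n u\|_{L^2(\Omega)}$ to full control of arbitrary derivatives of $u$ by combining iterated interior regularity with a carefully tracked commutator count against the Gevrey coefficients of $\mathcal{A}$.

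First, I would fix a relatively compact subdomain $\Omega' \Subset \Omega$ containing $K$, and choose a nested sequence of open sets $K \subset \Omega_0 \Subset \Omega_1 \Subset \cdots \Subset \Omega_N \Subset \Omega'$ with $\dist(\Omega_j, \partial \Omega_{j+1}) \approx d_0/N$, where $d_0 = \dist(K,\partial\Omega')$ and $N$ is proportional to the desired derivative order. The baseline tool is the standard interior elliptic a priori estimate for the second-order properly elliptic operator $\mathcal{A}$: for any $v$,
\begin{align*}
\|v\|_{H^{2}(\Omega_j)} \leq C\bigl(\|\mathcal{A} v\|_{L^2(\Omega_{j+1})} + (d_0/N)^{-2}\|v\|_{L^2(\Omega_{j+1})}\bigr),
\end{align*}
with $C$ independent of $N$. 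Iterating this $n$ times along the nested chain and using $\mathcal{A}^n u$ as the forcing yields a local $H^{2n}$-bound of the form $\|u\|_{H^{2n}(\Omega_0)} \leq C^n (d_0/N)^{-2n}\sum_{k=0}^n \|\mathcal{A}^k u\|_{L^2(\Omega')}$, which, coupled with Sobolev embedding $H^{2n}\hookrightarrow L^\infty$ when $2n > d/2$, will convert $L^2$ information into $L^\infty$ information on $K$.

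The heart of the matter is then to obtain bounds on all derivatives $D^\alpha u$, not only on the $2n$-th order norm. For this I would proceed by a simultaneous induction on $|\alpha|$ and on the domain index $j$: apply the interior estimate to $D^\beta u$ and use the Leibniz-type commutator identity
\begin{align*}
\mathcal{A}(D^\beta u) = D^\beta(\mathcal{A} u) - \sum_{|\alpha|\leq 2}\sum_{0<\gamma\leq \beta}\binom{\beta}{\gamma}(D^\gamma a_\alpha) D^{\beta-\gamma+\alpha}u,
\end{align*}
in which the derivatives that fall on the coefficients are controlled by the Gevrey hypothesis $\|D^\gamma a_\alpha\|_{L^\infty}\leq C_0 L^{|\gamma|}|\gamma|!^\sigma$. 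Iterating this commutator relation $n$ times and absorbing the binomial factors using the identity $\binom{\beta}{\gamma}\gamma!^\sigma(\beta-\gamma)!^\sigma \leq \beta!^\sigma$ (valid for $\sigma\geq 1$) transforms the $L^2$ estimate on $\mathcal{A}^n u$ into the Gevrey-type bound on $D^\alpha u$. The choice $N \sim |\alpha|$ in the nested-domain construction is what converts the factor $(d_0/N)^{-|\alpha|}$ into a constant to the power $|\alpha|$, completing the geometric part of the bound.

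The main obstacle, and the reason the argument is delicate, is the bookkeeping of combinatorial constants across the double induction: each application of $\mathcal{A}$ generates a sum over multi-indices whose total number must be balanced against the $(2n)!^\sigma$ growth of the hypothesis, and the nested-domain losses $(d_0/N)^{-2}$ must be absorbed into a uniform $\bar M_2^{|\alpha|}$. Careful choice of the shrinking scale and use of Stirling's formula are needed so that the product of factorial and combinatorial terms collapses to $|\alpha|!^\sigma$ and not something larger. Once these constants are tracked, the Sobolev embedding on $\Omega_0$ yields the desired $L^\infty$ estimate on $K$ and concludes the proof.
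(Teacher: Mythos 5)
The paper does not actually prove this lemma; it is quoted verbatim from Lions--Magenes and the text simply points to \cite[Theorem~VIII.2.4]{LM2}. So the comparison here is between your sketch and the known proof of the interior elliptic iterates theorem. Your outline is the standard strategy (nested domains with scale $\sim d_0/N$, the interior $H^2$ a priori estimate, Leibniz commutation with the Gevrey coefficients, the inequality $\binom{\beta}{\gamma}\gamma!^{\sigma}(\beta-\gamma)!^{\sigma}\le\beta!^{\sigma}$, and Sobolev embedding at the end), and each individual ingredient you name is correct.

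The gap is that the central step is asserted rather than derived. The claimed intermediate bound
\begin{align*}
\Vert u\Vert_{H^{2n}(\Omega_0)}\leq C^n (d_0/N)^{-2n}\sum_{k=0}^n\Vert \mathcal{A}^k u\Vert_{L^2(\Omega')}
\end{align*}
is essentially the $L^2$ form of the theorem itself, and it does not follow from ``iterating the interior estimate $n$ times'': the constant in $\Vert v\Vert_{H^{2m}}\lesssim\Vert\mathcal{A}v\Vert_{H^{2m-2}}+\Vert v\Vert_{L^2}$ is independent of the domain index only for $m=1$; for higher $m$ it depends on the $C^{2m-2}$ norms of the coefficients, which themselves grow like $(2m)!^{\sigma}$, and controlling this growth is exactly the content of the theorem. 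The way Lions--Magenes (and Kotake--Narasimhan in the analytic case) close this circle is by formulating a precise weighted induction hypothesis --- roughly a bound on $\sup_{\rho}\rho^{|\alpha|}\Vert D^{\alpha}u\Vert_{L^2(\Omega_{\rho})}/|\alpha|!^{\sigma}$ over the family $\Omega_{\rho}=\{x:\dist(x,\partial\Omega')>\rho\}$ --- and verifying that one application of the commuted elliptic estimate reproduces the same quantity with a uniformly bounded constant. Your proposal names the double induction and acknowledges that the bookkeeping is delicate, but never states the induction hypothesis, so there is no way to check that the combinatorial factors actually telescope to $|\alpha|!^{\sigma}$ rather than something larger. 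To turn the sketch into a proof you would need to write down that weighted inductive quantity explicitly and carry out the single inductive step; alternatively, simply cite \cite[Theorem~VIII.2.4]{LM2} as the paper does.
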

\colb
For the proof, see \cite[p.55, Theorem~VIII.2.4]{LM2}

\begin{proof}[Proof of Theorem~\ref{T02}]
Denote by $B^g$ a geodesic ball in $\mathcal{M}$ with the metric $g$. 
For each $p_\alpha\in \mathcal{M}$ there exists $B^g_{R_\alpha} (p_\alpha)$ such that in a local coordinate system $x_1, \ldots, x_d$, we have
\begin{align}
	\Delta_{\mathcal{M}} 
	= 
	\frac{1}{\sqrt{g}}
	\frac{\partial}{\partial x_i}
	\left(
	\sqrt{g} g^{ij} \frac{\partial}{\partial x_j}
	\right)
	=
	\sum_{i,j=1}^d
	b_{ij} \frac{\partial^2}{\partial x_i \partial x_j}
	+
	\sum_{i=1}^d b_i
	\frac{\partial}{\partial x_i}.
%	\inon{in $B^g_{R_\alpha} (p_\alpha)$}.
	\label{EQ702}
\end{align}
We denote the operator on the far right side of \eqref{EQ702} by~$\tilde{\mathcal{A}}$.
By the assumption on the metric $g$, it follows that $\tilde{\mathcal{A}}$ is a $\sigma$-Gevrey regular elliptic operator.
It is clear that $\mathcal{M}\subset \cup_{p_\alpha\in \mathcal{M}}
B_{R_\alpha/2}^g (p_\alpha)$.
Since $\mathcal{M}$ is compact, there exists a finite open cover 
\begin{align}
	\mathcal{M}
	\subset
	\cup_{i=1}^N
	B_{R_i/2}^g (p_i)
	,
	\llabel{EQ703}
\end{align}
where $p_i\in \mathcal{M}$ for $i=1,\ldots,N$. 
Since $E\subset \mathcal{M}$, there exists a ball $B_{R/2}^g (p)$ with $p\in \mathcal{M}$ such that
\begin{align}
	|B_{R/2}^g (p) \cap E|
	\geq
	\frac{|E|}{N}.
	\label{EQ701}
\end{align}
Let $n\in \mathbb{N}_0$.
From \eqref{EQ24}, it follows that
\begin{align}
	\begin{split}
		\Delta_{\mathcal{M}}^n h
		&
		= 
		\sum_{i=1}^m 
		\Delta_{\mathcal{M}}^n \phi_i
		=
		\sum_{i=1}^m (-1)^n \lambda_i^n \phi_i
		,
		\llabel{EQ84}
	\end{split}
\end{align}
from where 
\begin{align}
	\begin{split}
		\Vert \Delta_{\mathcal{M}}^n h \Vert_{L^2 (\mathcal{M})}^2  
		&
		=
		\int_{\mathcal{M}}
		\bigg|
		\sum_{i=1}^m \lambda_i^n \phi_i
		\bigg|^2
		\leq
		\sum_{i=1}^m 
		\lambda_i^{2n}
		\Vert  \phi_i \Vert_{L^2 (\mathcal{M})}^2
		\leq
		\lambda^{2n} \Vert h\Vert_{L^2 (\mathcal{M})}^2
		,
		\label{EQ64}
	\end{split}
\end{align}
since $\lambda=\max \{\lambda_1,\ldots,\lambda_m \}$ and $\int_M \phi_i \phi_j=0$ for $i,j=1,\ldots, m$ with $i\neq j$.
We now invoke the doubling property \cite[Theorem~4.1]{D}, which provides
constants $r_0,C >0$ such that
\begin{align}
	\Vert h\Vert_{L^\infty (B^g_{2r} (q))}
	\leq
	e^{\gamma}
	\Vert h\Vert_{L^\infty (B^g_{r} (q))}
%	\leq
%	C^{\sqrt{\lambda}}
%	\Vert h\Vert_{L^\infty (B^g_{r} (q))}
	\comma
	r\in (0,r_0]
	\comma
	q\in \mathcal{M},
	\label{EQ60}
\end{align}
where $\gamma=C \sqrt{\lambda} + C m^2 \log m +C$.
Note that the paper \cite{D} gives \eqref{EQ60} in the analytic case,
but it is easy to check that the proof extends to the Gevrey setting
with the constant $C$ depending also on~$\sigma$, which is considered fixed.
Without loss of generality, we may assume that $r_0\leq R/2$. 
Since $\mathcal{M}$ is compact, there exists $p_0\in \mathcal{M}$ such that $|h(p_0)|= \sup_{\mathcal{M}} |h|$.
Since $\mathcal{M}$ is connected,
we may join $p_0$ and $p$ by an overlapping chain of balls with radius $2r_0$.
Using \eqref{EQ60}, we obtain
\begin{align}
	\Vert h\Vert_{L^\infty (\mathcal{M})}
	=
	\Vert h\Vert_{B^g_{r_0} (p_0)}
	\leq
	C^{\gamma}
	\Vert h\Vert_{L^\infty (B^g_{r_0} (p))}
		\leq
	C^{\gamma}
	\Vert h\Vert_{L^\infty (B^g_{R/2} (p))}
	\label{EQ70}
	.
\end{align}
From \eqref{EQ64} and \eqref{EQ70}, it follows that
\begin{align}
	\begin{split}
	\Vert\tilde{\mathcal{A}}^n h\Vert_{L^2 (B^g_R (p))}
	&
	\leq
        C
	\Vert \Delta_M^n h\Vert_{L^2 (\mathcal{M})}
	\leq
	C\lambda^n \Vert h\Vert_{L^2 (\mathcal{M})}
	\leq
	C \lambda^n \Vert h\Vert_{L^\infty (\mathcal{M})}
	\leq
	C^{\gamma}
	\lambda^n 
	\Vert h\Vert_{L^\infty (B^g_{R/2} (p))}
	\\
	&
	\leq
		C^{\gamma}
		\left(
		\frac{(\lambda^{1/2\sigma})^{2n}}{(2n)!} 
		\right)^{\sigma}
		(2n)!^\sigma 
		\Vert h\Vert_{L^2 (B_{R/2}^g (p))}
		\leq
				C^{\gamma}
				e^{\sigma \lambda^{1/2\sigma}}
		(2n)!^\sigma 
		\Vert h \Vert_{L^2 (B_{R/2}^g (p))}
		\\&
		\leq
		C^{\gamma}
		(2n)!^\sigma 
		\Vert h \Vert_{L^2 (B_{R/2}^g (p))}
		\comma
		n\in\mathbb{N}_0,
	\label{EQ71}
	\end{split}
\end{align}
since $\lambda\geq 1$ and $\sigma\geq 1$.
Let $\phi(x)=h(x)/ (C^{\gamma} \Vert h\Vert_{L^2 (B_{R/2}^g (p))})$.
Then by \eqref{EQ71}, we obtain
\begin{align}
	\Vert \tilde{\mathcal{A}}^n
	\phi
	\Vert_{L^2 (B_{R}^g (p))}
	\leq
	(2n)!^{\sigma}
	\comma
	n\in \mathbb{N}_0.
	\label{EQ834}
\end{align}
Using \eqref{EQ834} in
Lemma~\ref{L02} yields
\begin{align}
	\Vert \partial^\alpha \phi\Vert_{L^\infty (B_{R/2}^g (p))}
	\leq
	\bar{M}_1
	\bar{M}_2^{|\alpha|}
	|\alpha|!^\sigma
	\comma
	\alpha\in \mathbb{N}_0^d,
	\llabel{EQ143}
\end{align}
where $\bar{M}_1, \bar{M}_2>0$ are constants,
from where
\begin{align}
	\begin{split}
		\Vert \partial^\alpha h\Vert_{L^\infty (B_{R/2}^g (p))}
	&\leq
	C^{\gamma}
	\bar{M}_1
	\bar{M}_2^{|\alpha|}
	|\alpha|!^\sigma
	\Vert h\Vert_{L^\infty (B^g_{R/2} (p))}.
	\label{EQ43}
	\end{split}
\end{align}

We first treat the case $\sigma=1$. 
From \eqref{EQ301}, \eqref{EQ701}, \eqref{EQ60}, and \eqref{EQ43}, we obtain
\begin{align}
\begin{split}
	\Vert h\Vert_{L^\infty (B_{R/2}^g (p))}
	\leq	
	\left(\frac{C}{|E|/N} \right)^{C \gamma}
	\Vert h\Vert_{L^\infty (E)}
	,
	\label{EQ111}
\end{split}
\end{align}
and by \eqref{EQ70} and \eqref{EQ111} we infer that
\begin{align}
		\Vert h\Vert_{L^\infty (\mathcal{M})}
		\leq
		\left(\frac{C}{|E|} \right)^{C \gamma}
		\Vert h\Vert_{L^\infty (E)},
\end{align}
where $C\geq 1$ is a constant depending on~$\mathcal{M}$.

Similarly, if $\sigma>1$, we use \eqref{EQ305}, \eqref{EQ701}, \eqref{EQ60}--\eqref{EQ70}, and \eqref{EQ43} to get
	\begin{align}
	\Vert h\Vert_{L^\infty (\mathcal{M})}
	\leq
	C^{\gamma}
	\Vert h\Vert_{L^\infty (B_{R/2}^g (p))}
	\leq
	\left(\frac{C}{|E|} \right)^{C\gamma}
	\gamma^{C (\sigma-1) \gamma}
	\Vert h\Vert_{L^\infty (E)}
	,
	\llabel{EQ85}
\end{align}
where $C$ is a constant depending on $\sigma$ and~$\mathcal{M}$.
\end{proof}

Finally, we prove the last theorem on observability under the unique continuation condition~\eqref{EQ030}.

\begin{proof}[Proof of Theorem~\ref{T03}]
Let $\epsilon=1/100$. 
For each $z_\alpha \in \partial \Omega$, there exists a local coordinate system $(z', z_d) \in \mathbb{R}^{d-1}\times \mathbb{R}$, a constant $R_\alpha>0$, and a $C^1$ function $\phi_\alpha$ such that:
\begin{enumerate}[label=(\roman*)]
	\item $B_{R_\alpha} (z_\alpha) \cap \Omega =\{ (z',z_d)\in B_{R_\alpha} (z_\alpha): z_d > \phi_\alpha (z')\}$,
	\item $B_{R_\alpha} (z_\alpha) \cap \partial \Omega =\{ (z',z_d)\in B_{R_\alpha} (z_\alpha): z_d = \phi_\alpha (z')\}$, and
	\item $|\phi_\alpha (z_1')- \phi_\alpha (z_2') |\leq \epsilon |z_1'-z_2'|$, for all $(z_1', \phi_\alpha (z_1')), (z_2', \phi_\alpha (z_2')) \in B_{R_\alpha} (z_\alpha)$.
\end{enumerate}
It is clear that $\partial \Omega\subset \cup_{z_\alpha \in \partial \Omega} B_{R_\alpha/2} (z_\alpha)$.
Since $\partial \Omega$ is compact, there exists a finite open cover 
\begin{align}
	\partial \Omega \subset \cup_{i=1}^N B_{R_j/2} (z_j).
	\label{EQ501}
\end{align}
Denote $R_0=\min\{R_1,\ldots,R_N\}/4$ and $\tilde{r}_0=\min \{R_0, r_0,1\}$.
By Lemma~\ref{L00}, there exists a ball $B_r(x)$ with $x\in \Omega$ such that
\begin{align}
	|B_r(x) \cap E|
	\geq 
	\frac{|E|r^d}{C},
\end{align}
for some constant $C\geq 1$. 

Let $r\in (0,\tilde{r}_0]$ and $n\in \mathbb{N}_0$.
First we consider the case when $B_r (x)\cap \Omega\neq \emptyset$.
From \eqref{EQ110} and \eqref{EQ030}, we proceed analogously to~\eqref{EQ163}--\eqref{EQ13}, obtaining
\begin{align}
	\begin{split}
		\Vert f\Vert_{L^\infty (\Omega)}
	&	\leq
		\exp 
		\left(
		\frac{a}{(r/10)^b}
		\right)
		\Vert f\Vert_{L^\infty ( B_r (\tilde{x}))}
		\leq
		2	\exp 
		\left(
		\frac{a}{(r/10)^b}
		\right)
		|f(w)|
		\\	&
		\leq
			\exp \left(
		\frac{a}{(r/10)^b}
		\right)
		C^{n+1}
		\bigl(
		(|\Omega|/|E|)^n \Vert f\Vert_{L^\infty (E)}
		\\&\indeq\indeq
		+
		M (1/\delta)^{n+1}  r^{n+1}
		(n+1)^{(n+1)(\sigma-1)}
		\Vert f\Vert_{L^\infty (\Omega)}
		\bigr)
         .
		\label{EQ113}
	\end{split}
\end{align}
Note that \eqref{EQ113} holds for any $n\in \mathbb{N}_0$ and $r\in (0,\tilde{r}_0]$. 
Let 
\begin{align}
	r= 
	10
	\left(
	\frac{b}{n+1} \right)^{1/b}
	\label{EQ230},
\end{align}
where $n$ is a sufficiently large constant to be determined below;
in particular, we need the right-hand side of \eqref{EQ230}
to be less than or equal to~$\tilde{r}_0$.
Inserting \eqref{EQ230} into the far-right side of \eqref{EQ113}, we obtain
\begin{align}
	\begin{split}
		\Vert f\Vert_{L^\infty (\Omega)}
		&
		\leq
		C_0
		e^{a/b}
		\left(
		(C_0 e^{a/b} |\Omega|/|E|)^n 
		\Vert f\Vert_{L^\infty (E)}
		+
		M 
		\left(\frac{ C_0 e^{a/b}  b^{1/b} }{ \delta(n+1)^{1/b-\sigma+1} }
		\right)^{n+1}
		\Vert f\Vert_{L^\infty (\Omega)}
		\right),
		\label{EQ134}
	\end{split}
\end{align}
where $C_0\geq 1$ is a constant.
Note that $1/b-\sigma+1>0$.
Let $n_0\in \mathbb{N}_0$ be the largest integer less than or equal to
\begin{align}
	\xi:=
	\frac{1}{\log (2C_0 e^{a/b} |\Omega|/|E|)}
	\log
	\left(
	\frac{ M \Vert f\Vert_{L^\infty (\Omega)}}{\Vert f\Vert_{L^\infty (E)}}
	\right)
	+
	\max
	\biggl\{
	\frac{10^b b}{ \tilde{r}_0^b}
	,
	\left(
	\frac{2C_0 e^{a/b}  b^{1/b} }{\delta}
	\right)^{\frac{1}{1/b-\sigma+1}}
	\biggr\}.
	\llabel{EQ36}
\end{align}
The choice of $n=n_0$ ensures that $r\leq \tilde{r}_0$ in \eqref{EQ230}, and the factor on the right side of \eqref{EQ134} satisfies
\begin{align}
	\frac{C_0 e^{a/b}  b^{1/b}}{\delta (n+1)^{1/b-\sigma+1}} 
	\leq 
	\frac{1}{2}
	\llabel{EQ37}.
\end{align}
Inserting $n=n_0$ into \eqref{EQ134}, we arrive at
\begin{align}
	\begin{split}
		\Vert f\Vert_{L^\infty (\Omega)}
		&
		\leq
		C_0
		e^{a/b}
		\bigl(
		 (C_0 e^{a/b} |\Omega|/|E| )^{n_0} \Vert f\Vert_{L^\infty (E)}
		+
		{M \Vert f\Vert_{L^\infty (\Omega)}}{2^{-(n_0+1)}}
		\bigr)
		\\&
		\leq
		C_0
		 e^{a/b}
		\bigl(	(C_0 e^{a/b} 
		 |\Omega| /|E|)^{\xi} \Vert f\Vert_{L^\infty (E)}
		+
		{M \Vert f\Vert_{L^\infty (\Omega)}}{2^{-\xi}}
		\bigr)
		\leq
		C_1
		\Vert f\Vert_{L^\infty (\Omega)}^\gamma
		\Vert f\Vert_{L^\infty (E)}^{1-\gamma}
		,
		\llabel{EQ35}
	\end{split}
\end{align}
where
\begin{align}
	\gamma
	=
	\frac{\log (e^{a/b}  C_0 |\Omega|/|E|)}{\log (2e^{a/b}  C_0 |\Omega|/|E|)}
	\llabel{EQ51}
\end{align}
and $C_1>0$ is a constant depending on $a,b,\tilde{r}_0, M, \sigma, \delta$, and $|\Omega|/|E|$.
Thus, we obtain
\begin{align}
	\begin{split}
	\Vert f\Vert_{L^\infty (\Omega)}
	\leq
	C_1^{1/(1-\gamma)}
	\Vert f\Vert_{L^\infty (E)},	
	\end{split}
\end{align}
completing the proof of the theorem.
The case when $B_r(x)\cap \Omega=\emptyset$ is similar, and thus we omit the details.
\end{proof}

\section*{Acknowledgments}
IK was supported in part by the
NSF grant
DMS-2205493.

\end{document}